                         %&latex
\documentclass[10pt,a4paper,reqno]{amsart}
\usepackage{amsfonts,amsthm,latexsym,amsmath,amssymb,amscd,amsmath, epsf}

\usepackage[T2A]{fontenc}
\usepackage[cp1251]{inputenc}
\usepackage{latexsym,amssymb,amsthm}
\usepackage{url,graphics} 
\usepackage[dvips]{graphicx,epsfig} 

\newtheorem{theorem}{Theorem}

\newtheorem{lemma}[theorem]{Lemma}
\newtheorem{definition}{Definition}

\newtheorem{remark}{Remark}

\newtheorem{conjecture}{Conjecture}

\def\q#1.{{\bf #1.}}
\renewcommand\geq{\geqslant}
\renewcommand\leq{\leqslant}

\newcommand{\bR}{\mathbb{R}}

\newcommand{\be}{\begin{equation}}
\newcommand{\ee}{\end{equation}}

\newcommand {\ga}{\gamma}

\newcommand {\diff} {\mbox{\emph{diff}}} 

\tolerance=500

\begin{document}
          \numberwithin{equation}{section}

          \title[Counting the minimal number of inflections of a plane curve]
          {Counting the minimal number of inflections of a plane curve}

\author[G. Nenashev]{Gleb Nenashev}
\address{  Chebyshev Laboratory, St. Petersburg State University, 14th Line, 29b, Saint Petersburg, 199178 Russia.}
\thanks{The author is supported by Rokhlin grant, %by RF Government grant 11.G34.31.0026 
by the Chebyshev Laboratory  (Department of Mathematics and Mechanics, St. Petersburg State University)  under RF Government grant 11.G34.31.0026
and by JSC "Gazprom Neft".}
\email{glebnen@mail.ru}

\begin{abstract} 
Given a plane curve $\ga: S^1\to \bR^2$, we consider the problem of determining the minimal
number $I(\ga)$ of inflections which  curves $\diff(\ga)$ may have, where $\diff$
runs over the group of  diffeomorphisms of $\bR^2$. We show  that if $\ga$ is an immersed curve with $D(\ga)$ 
 double points and no other singularities,  
then $I(\ga)\leq 2D(\ga)$. In fact, we prove the latter result for the so-called plane doodles which are finite collections of closed immersed plane curves whose only singularities are double points.  
\end{abstract}
\maketitle

\section{\bf Introduction}

It is obvious that  any plane curve $\ga:S^1\to \bR^2$ diffeomorphic to the figure-eight must have at least two inflection points. 
Generalizing this observation, B.~Shapiro  posed in \cite{Sh} the problem of finding/estimating the minimal number 
of inflection points of a given immersed plane curve having only double points  
under the action of the group of
%Merx(
% plane diffeomorphisms. 
 diffeomorphisms of the plane. 
%Merx)
He obtained a number of results for the class of the so-called tree-like curves characterized by the 
property that  removal of any double point makes the curve disconnected.

\begin{definition}
A tree-like curve 
is a closed immersed 
plane curve with follow property: 
 removal of any double point with its neighborhood makes the curve disconnected
\end{definition}

In particular, using a natural  plane tree associated to any tree-like curve, he got lower and upper bounds for the number 
of inflections for such  curves and also found a criterion  
when a tree-like curve can be drawn without inflections.

 When we say that a curve  $\gamma$ can be drawn with a certain number 
of inflection points we mean that there is a plane diffeomorphism
$\diff$
 such that $\diff(\gamma)$ has 
 that many inflections. 
Respectively drawing is $\diff(\gamma)$.

In what follows we shall work with the following natural generalization of immersed plane curves with at most double points, comp. e.g. \cite{Me1}. 

\begin{definition} A doodle is a union of a finite number of closed immersed plane curves 
without triple intersections.
\end{definition}

The main result of this note is as follows. 

\begin{theorem}\label{th:main}
Any doodle with $n$ double points can be drawn with at most $2n$ inflection points. 
\end{theorem}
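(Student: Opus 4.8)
The natural line of attack is induction on the number $n$ of double points, where one inductive step reinstates a single double point and pays at most two inflections for it. To make such a step legitimate, I would induct not on the bare statement but on a relative strengthening: for every doodle $D$ with $n$ double points and every finite collection of pairwise disjoint disks $\Delta_1,\dots,\Delta_k$ meeting $D$ transversally in finitely many arcs disjoint from the double points, $D$ can be carried by a plane diffeomorphism $\phi$ to a curve with at most $2n$ inflections such that inside each $\phi(\Delta_i)$ the curve $\phi(D)$ is a union of straight segments placed in a fixed ``standard'' pattern (a stack of parallel horizontal chords). Since a straight segment may be replaced by a slightly convex arc without creating an inflection or altering the combinatorics of $D$, imposing straightness on small disks is harmless, so the case $k=0$ is exactly Theorem~\ref{th:main}.

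For the base case $n=0$ the doodle is a disjoint, possibly nested, union of embedded circles; draw each as a round circle, deformed into a ``stadium'' shape (two parallel segments capped by semicircular arcs) wherever it must meet a prescribed disk, with the segments placed in standard position there. The signed curvature is nowhere a sign-changing zero, so there are no inflections. For the inductive step pick any double point $p$ of $D$, choose a small disk $\Delta_p$ around $p$ disjoint from all $\Delta_i$, and smooth the crossing inside $\Delta_p$; this yields a doodle $D'$ with $n-1$ double points in which the ``$\times$'' at $p$ has become two disjoint arcs $a_1,a_2\subset\Delta_p$. Apply the inductive hypothesis to $D'$ with the disks $\Delta_1,\dots,\Delta_k,\Delta_p$, obtaining $\phi'$ for which $\phi'(D')$ has at most $2(n-1)$ inflections and, inside $\phi'(\Delta_p)$, consists of two standard parallel straight segments. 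Now undo the smoothing inside $\phi'(\Delta_p)$ only: replace the two parallel segments by the ``$\times$''-pattern that $D$ has at $p$, realizing each of the two branches of the crossing by an embedded smooth arc that joins an end of one segment to an end of the other, meets the other branch exactly once, and matches the prescribed (horizontal) tangent directions at its two ends. Such an interpolating arc always exists with at most one inflection — an arc that turns one way and then the other suffices to realize arbitrary endpoint data in a disk — so the crossing is reinstated at a cost of at most two inflections. The resulting $\phi(D)$ has at most $2(n-1)+2=2n$ inflections and is still standard on the $\phi(\Delta_i)$, closing the induction.

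I expect the real work to sit in two places. First, the relative hypothesis must be set up so that it is genuinely preserved: one has to check that finitely many standard disks can always be accommodated at the base case and are carried along correctly through the recursion, that the modification performed inside $\phi'(\Delta_p)$ introduces no spurious double points and no inflections outside $\Delta_p$, and that smoothing a crossing really does keep us inside the class of doodles with one fewer double point. Second, and most importantly, one must make precise the local claim that a ``$\times$'' crossing can be filled across a disk against a straight, standard boundary using at most one inflection per branch; this elementary lemma is exactly what produces the constant $2$, and the figure-eight together with its obvious higher analogues shows that the bound $2n$ cannot in general be improved. (A non-inductive variant of the same idea — draw all $n$ crossings as small standard ``$\times$''s arranged along a line, each in its own disk, and connect them up by convex arcs, reading off the inflections from the sign changes forced along each component — should give the same estimate, but the inductive formulation keeps the bookkeeping under tighter control.)
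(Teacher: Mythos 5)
Your overall scheme (smooth a crossing, redraw the smaller doodle by induction, re-insert the crossing at a cost of two inflections) is the natural first attack, and it is not the route the paper takes; unfortunately the step you yourself flag as ``the real work'' is a genuine gap, not a technicality. The inductive hypothesis hands you a plane diffeomorphism $\phi'$, and the region in which you must perform the surgery is the image $\phi'(\Delta_p)$, which is an arbitrary topological disk: knowing that the two strands are straight segments inside it does not tell you that they sit as two nearby parallel chords of a metrically round disk. They may be far apart inside $\phi'(\Delta_p)$, joined only through a long bent corridor, and then the assertion ``an arc that turns one way and then the other realizes arbitrary endpoint data'' no longer applies; the cost of re-inserting the ``$\times$'' is not obviously two inflections. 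If, to repair this, you strengthen the relative hypothesis to require the two strands to be parallel chords of a round (or convex) disk, the strengthened statement becomes false in general: already at the base case $n=0$ the curve must be drawn with no inflections, hence each component convex, and a convex curve cannot meet two disjoint convex disks in interleaved pairs of arcs (the intersection of the convex region with each disk is convex, hence connected, which forces the nested order); yet interleaved visiting patterns do arise from smoothing honest doodles, e.g.\ two circles crossing twice with suitable smoothings give one circle visiting the two marked disks in the order $1,2,1,2$. So the relative statement is either too weak to justify the two-inflection insertion or too strong to be true, and choosing crossings/smoothings so as to avoid all such configurations would itself need an argument you do not give. Your parenthetical non-inductive variant (standard crossings in a row joined by ``convex arcs'') has the same unaddressed difficulty: nothing bounds the inflections of the connecting arcs forced by the planar pattern.

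The paper sidesteps all of this with a global argument rather than a local induction. It takes a minimal counterexample, first rules out faces of length $1$ and $2$ and triple edges (these reductions are the only place where a ``remove and re-insert at cost $2$'' step occurs, and there the re-inserted fragment is attached at a single vertex or a contracted pair, so the local picture is controlled), then subdivides loops and double edges by fake vertices to get a simple planar graph, applies Fary's theorem to obtain an equivalent straight-line drawing, and smooths the four-valent vertices. A separate lemma shows each edge of the original graph then carries at most one inflection, and since the $4$-regular graph on $n$ vertices has exactly $2n$ edges, the bound $2n$ follows. The key idea your proposal is missing is precisely this use of a straight-line (Fary) drawing to control \emph{all} strands simultaneously, instead of trying to propagate local ``standard position'' data through an induction.
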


We conjecture the following stronger statement.

\begin{conjecture}\label{conj:main} 
Any closed plane curve with $n$ double points can be drawn with at most $n+1$ inflection points. 
\end{conjecture}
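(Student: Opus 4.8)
The plan is to recast the question as a combinatorial optimization over the drawings of a fixed Gauss code, with $n+1$ entering through Euler's formula, and then to run an induction on $n$. If $\ga$ is in general position, the $4$-valent plane graph $G_\ga$ it determines has $n$ vertices and $2n$ edges, hence exactly $n+1$ bounded faces. A diffeomorphism of $\bR^2$ does not alter the Gauss code of $\ga$, so bounding $I(\ga)$ amounts to choosing the most favourable immersed curve carrying that code. Among these there are ``nearly polygonal'' drawings: edges almost straight, all turning concentrated into arbitrarily small angles at the double points. For such a drawing the curvature can be kept of constant sign along each maximal block of equally signed turns, so its number of inflections equals the number of sign changes in the cyclic word $\varepsilon_1\cdots\varepsilon_{2n}\in\{+,-\}^{2n}$ that records, passage by passage, whether $\ga$ veers left or right at its double points. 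This number is automatically even, and the conjecture becomes: every Gauss code with $n$ crossings has a planar realization whose turn word has at most $n+1$ sign changes --- equivalently, $\ga$ can be drawn as a slightly smoothed polygon assembled from at most $n+1$ maximal convex arcs.

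For the induction one removes one double point at a time. Given any crossing $p$, the non-oriented smoothing at $p$ --- the one reversing the orientation of a sub-arc, which is harmless since inflections ignore orientation --- always produces a single closed curve $\ga'$ with $n-1$ double points in general position, and by the inductive hypothesis $\ga'$ has a drawing with at most $n$ inflections. To return to $\ga$ one re-creates the crossing inside a small disk by pushing one of the two local arcs of $\ga'$ across the other; if the drawing of $\ga'$ can be chosen with those arcs straight and non-nested near the disk this is free, while in the worst case (two nested convex arcs) one must poke a ``tongue'' through, at a cost of two new inflections. Since the target grows only by one ($n+1\mapsto n+2$), the crux is an amortization: one must show that precisely when reinserting $p$ is forced to cost two inflections, $\ga'$ can instead be drawn with $n-1$ inflections, so the running total never exceeds $n+1$. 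The figure-eight shows that no coarser bookkeeping can succeed: there $n=1$, smoothing the crossing yields an embedded circle (drawn convexly, $0$ inflections), and the reinsertion unavoidably costs $2=n+1$. Shapiro's normal form for tree-like curves in \cite{Sh} --- realizing such a curve inside a convex outer contour and reading its inflections off the associated plane tree --- is the natural guide for the amortization and already disposes of the case in which no two chords of the Gauss diagram interleave.

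To implement the amortization I would carry a strengthened hypothesis --- that $\ga$ admits an optimal drawing whose unbounded face is the exterior of a fixed convex curve --- and perform the reduction at a bounded face $f_0$ bounded by a simple closed sub-arc of $\ga$ (at least one such face exists). That face can be drawn as a genuine convex cell, contributing no internal sign changes; merging it with a neighbour by smoothing the crossings on its boundary deletes double points and deletes exactly one bounded face, so the residual problem is to splice the $+/-$ pattern along $\partial f_0$ into the optimal turn word of the reduced object without creating more than one new convex arc. The induction also has to be run for doodles, with target ``at most $n+c$ inflections'' for $c$ components, since a smoothing may disconnect the curve; this refines Theorem~\ref{th:main}.

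The step I expect to be the genuine obstacle, however, is realizability: a cyclic $+/-$ word with few sign changes is trivial to write down, but exhibiting an actual immersed plane curve that both carries the prescribed Gauss code and realizes that word --- a polygon of prescribed combinatorial type with prescribed convex/reflex status at every corner --- is a constrained, ``stretchability''-flavoured question, and it is conceivable that for some codes the economical word is geometrically unrealizable and extra cases intervene. Determining exactly which turn words a given code admits, and proving that one with at most $n+1$ sign changes is always among them, is where the real difficulty lies, and is the reason the statement is at present only conjectural.
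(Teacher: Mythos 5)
First, a point of order: the paper does not prove this statement --- it is posed as Conjecture~\ref{conj:main} and left open; the paper only establishes the weaker bound $2n$ for doodles (Theorem~\ref{th:main}) and the bound $n$ for tree-like curves other than the figure-eight (Theorem~\ref{th:tree}). So there is no proof in the paper to compare yours against, and your own closing paragraph concedes that what you have is a programme rather than a proof. Judged as a proof attempt it has two genuine gaps. The first is the amortization claim at the centre of your induction: you assert that whenever re-creating the smoothed crossing is forced to cost two inflections, the reduced curve $\ga'$ can instead be drawn with $n-1$ rather than $n$ inflections. No mechanism is offered for this, and Section~3 of the paper is evidence against any such local criterion: there are exponentially many topologically distinct minimal fragments forcing an inflection, so whether $\ga'$ has slack below its inductive bound is a global question that cannot be read off from the configuration of the two strands near the removed crossing. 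The second gap is the reinsertion cost itself. After $\ga'$ is drawn optimally by some diffeomorphism, the two strands that must be made to cross again need not be adjacent in the plane --- other branches of the curve may separate them --- so ``pushing one local arc across the other inside a small disk'' is not available without first deforming the drawing, possibly creating inflections elsewhere. This is exactly why the proof of Theorem~\ref{th:main} only removes very controlled configurations (faces of length $1$ and $2$, triple edges), where the reinsertion really is local and costs at most two inflections, and then abandons the crossing-by-crossing induction in favour of a global Fary straight-line drawing in which each edge carries at most one inflection.

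Your reduction to turn words is likewise only half an argument: the identification of the minimal inflection number with the minimal number of sign changes in a \emph{realizable} turn word requires proving both that an optimal drawing can be made nearly polygonal and, much harder, that some turn word with at most $n+1$ sign changes is realizable for every realizable Gauss code. You correctly name this realizability question as the crux, but naming it does not close it. In short: a reasonable plan, consistent in spirit with the local-reduction steps the paper uses for its weaker theorems and with Shapiro's tree-like analysis in \cite{Sh}, but both load-bearing steps (amortization and realizability) are missing, which is consistent with the statement remaining a conjecture in the paper.
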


This conjecture is true for tree-like curves. %Slightly stronger theorem holds, which contains a tight estimate.

\begin{theorem}\label{th:tree}
Any tree-like curve with $n$ double points except figure-eight can be drawn with at most $n$ inflection points.
%i.e. $I(\gamma)\leq D(\gamma)$.
\end{theorem}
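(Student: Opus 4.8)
The plan is to induct on the number $n$ of double points, organised by the plane tree $T_\gamma$ naturally attached to a tree-like curve: cutting $\gamma$ at all of its double points decomposes it into a finite collection of embedded loops, which are the vertices of $T_\gamma$, and the double points are its edges; the tree-like hypothesis is exactly what makes $T_\gamma$ a tree (and a plane tree, via the cyclic orders of the four branches at each double point). A figure-eight is precisely the case in which $T_\gamma$ is a single edge. We also use the elementary remark that, the curvature of a closed curve changing sign an even number of times, the number of inflections of any drawing is even; together with $T_\gamma$ this will be the combinatorial data we track.

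For the base cases I would handle $n=0$ (an embedded circle, drawn convexly, $0$ inflections) and $n=2$ directly: there are only a few tree-like curves with two double points, and each is drawn with $2$ inflections by hand. The value $n=1$ never has to be discussed, since the figure-eight is excluded. For $n\geq 3$ I would choose a leaf loop $L$ of $T_\gamma$, attached to the remainder of the curve at a single double point $p$; deleting $L$ (and smoothing $p$ along the resolution that retains the rest) produces a tree-like curve $\gamma'$ with $n-1\geq 2$ double points, hence not a figure-eight, so by the induction hypothesis $\gamma'$ has a drawing $D'$ with at most $n-1$ inflections. It then remains to re-insert $L$ at the point of $D'$ prescribed by the combinatorics of $T_\gamma$ so that the resulting drawing of $\gamma$ has at most one more inflection; this gives the bound $n$.

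The heart of the argument — and the step I expect to be the main obstacle — is this re-insertion lemma: given a drawing $D'$ of a tree-like curve that is not a figure-eight, a point $x$ on it, and a prescribed side, one can produce a drawing of $\gamma'$ together with a small convex loop attached at $x$ having at most one additional inflection. The point is that near $x$ there is enough flexibility — because $\gamma'$ has at least two double points, hence at least three vertices in $T_{\gamma'}$ — to arrange, possibly after redrawing $D'$ within its inflection budget, that the arc of $D'$ through $x$ is convex on the side where $L$ must sit; then $L$ can be drawn as a small convex loop following the local curvature of $D'$ and contributes no inflection, while in the unavoidable configurations it contributes exactly one. The figure-eight is exactly the situation where this flexibility collapses and two inflections are genuinely forced, which is why it is the sole exception. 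Making this dichotomy precise, and verifying that the preliminary redrawing of $D'$ needed to expose a convenient convex arc near $x$ never increases the inflection count (rather than applying an arbitrary diffeomorphism, one re-runs the construction with $x$'s arc placed favourably), is the delicate part. An alternative that removes the separate $n=2$ base case is to prove the sharper statement that a convex loop can be attached to a standard $2$-inflection drawing of the figure-eight for free, which then feeds the induction uniformly from $n=2$ onward.
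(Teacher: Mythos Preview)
Your plan has a real gap at the re-insertion step, and the parity observation you yourself make works against you rather than for you. Attaching a single leaf loop to a given drawing adds either $0$ or $2$ inflections, never $1$: if the loop sits on the convex side of the host arc it can be drawn with no new inflection, while on the concave side the two transitions each force a sign change of curvature. So your ``contributes exactly one'' cannot occur, and ``at most one more'' really means ``zero more'' whenever the drawing of $\gamma'$ already uses its full even budget. That in turn would force the leaf you removed to lie on the inner side of its host arc in some optimal drawing of $\gamma'$ --- but the side is dictated by $\gamma$, not chosen by you. Your proposed remedy (``redraw $D'$ within its inflection budget so that the arc through $x$ is convex on the required side'') is precisely the hard content of the theorem: you give no mechanism for it, and there is no evident reason such a redrawing should exist in general. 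Strengthening the induction hypothesis to include this local convexity control would be one way forward, but you have not set that up, and it is far from obvious.

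The paper avoids this trap by removing \emph{two} pieces at once and re-inserting them with at most $2$ new inflections, so the parity and the budget match. Crucially, the pair is not arbitrary: one takes the longest path $v_1\cdots v_k$ in the associated tree and classifies vertices as \emph{good} or \emph{bad} according to whether they bound the outer face. In a minimal counterexample one shows that either (a) $v_2$ has two consecutive good leaf neighbours, or (b) $\deg v_2=2$ with $v_1,v_2$ good and $v_1\cup v_2$ not bounding the outer face; in either configuration the removed pair can be restored at a cost of at most two inflections, and the remaining possibilities are eliminated by a short case analysis. This yields $I(\gamma)\le n+1$ directly; the bound $\le n$ then follows after checking $n=2,3$ by hand, since the inductive step goes from $n$ to $n-2$. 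If you try to salvage the one-leaf-at-a-time scheme by choosing the leaf carefully, you will be led to exactly this kind of longest-path and outer-face analysis --- which is the idea your proposal is missing.
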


The bound from Theorem~\ref{th:tree} is tight. There are examples with $2k$ double points, which can not be drawn with less than $2k$ inflections. We must take the closed curve with alternating $2k$ loops by turn outward and inward.

In complement to Theorems~\ref{th:main} and~\ref{th:tree}, 
  we present  in \S3 an infinite family of topologically distinct minimal fragments forcing 
an inflection point which implies that the problem of defining the exact minimal 
number of inflection points of a given doodle is algorithmically very hard.  Therefore there is no chance to obtain an explicit formula for the latter number except for some very special families of plane curves. 
Our results seem to support the general principle that invariants of curves and knots of geometric origin 
are  difficult to calculate even algorithmically.  Observe that algebraic invariants of doodles similar 
to Vassiliev invariants of knots were introduced by V.~I.~Arnold in \cite{Ar} and 
later  considered by number of authors. See especially,  \cite{Me1}, \cite{Me2}. 

\medskip
\noindent
{\bf Acknowledgement.}  The author is  grateful to the Mathematics Department 
of Stockholm University for the financial support of his visit to 
Stockholm in November 2013 and to Professor B.~Shapiro for the formulation of the problem.

%%%%%%%%%%%%%%%%%%%%%%%%%%%%%%%%%%%%%%%%%%%%%%%%%%%%%%%%%%%%%%%%%%%%%%%%%%%%
\bigskip
\section{\bf Proofs}

\begin{proof}[Proof of Theorem~\ref{th:main}] Assume the contrary, i.e. that there exists a doodle with $n$ double points 
which can not be drawn with less than $2n+1$ inflections.
Let us consider a counterexample with the minimal number of double points. 

Obviously our counterexample is not an embedded circle and it is connected. 
Consider this doodle as an (obvious) planar graph $G$ 
with possible 
multiple 
edges and loops. 
Double points are the vertices  of this graph, and the arcs 
connecting double points are the edges.  

By faces of a doodle we mean the bounded faces of the (complement to the) planar graph. 
By the length of a face we mean  the number of edges 
 in its boundary.

\begin{lemma} 
A minimal counterexample has the following properties:

\begin{itemize}
	\item $a)$  there are no faces of length $1$.
	\item $b)$ there are no faces of length $2$.
	\item $c)$ there are no edges of multiplicity $\geq 3$.
\end{itemize}

\begin{proof}
{\bf a)}
 {\it Assuming that there exists a face of length $1$};  remove temporarily 
its boundary and remove the resulting vertex of valency $2$  
by gluing two edges into one. (It might happen that there will be
no vertices left.) Then we obtain a graph corresponding to a doodle 
with $n-1$ double points. 

Thus we can draw a new doodle with at most $2n-2$ inflection points. 
Then by returning back the removed face we add no more than $2$ inflection points, see Fig.~\ref{return-pet}.
Contradiction with the minimality assumption.

{\begin{figure}[htb!]

\centering
\includegraphics[scale=0.4]{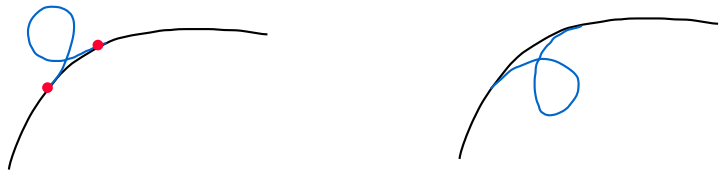}
\caption{Returning the face of length $1$.}
\label{return-pet}
\end{figure}
}

{\bf b)} {\it Assuming that there exists a face of length $2$}, denote the vertices of 
this face by $A, B$, and its edges by  $l_1, l_2$. Vertices $A$ and $B$ are distinct, since otherwise this common vertex 
would have valency $4$, and therefore there exist edges joining this face with other vertices. 
But then our doodle  has just one double point; it is easy to check that this can 
not be a counterexample. 

Remove edges $l_1,l_2$ and contract $A$ and $B$ to one vertex called  $\widehat{AB}$. We obtain a new 
doodle with  $n-1$ double points.   By the minimality of our counterexample we can draw it with no more than  $2n-2$  inflection points. 
Ungluing the double vertex $\widehat{AB}$  and smoothing the resulting picture we add exactly two new inflection points, see Fig.~\ref{return-kr}.
Contradiction with the minimality assumption.

{\begin{figure}[htb!]

\centering
\includegraphics[scale=0.55]{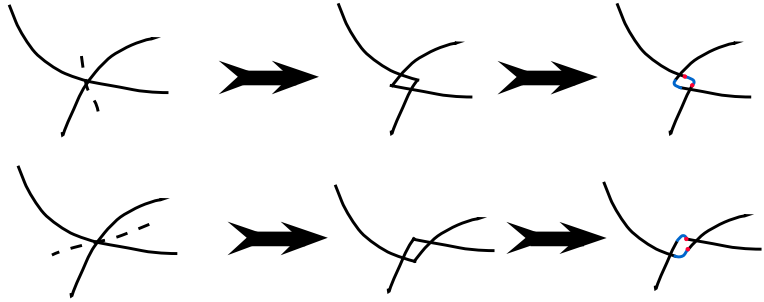}
\caption{Returning pairs of double edges bounding the face of length $2$.}
\label{return-kr}
\end{figure}
}

{\bf c)} {\it Assume that there exists a  triple edge}. 
Consider edges $l_1,l_2,l_3$, forming this triple edge and connecting a pair of vertices called $A,B$. 
(Observe  that $A$ and  $B$ are distinct, since otherwise their valency should be  $6$, but the maximal valency is  $4$.)

Edges  $l_1,l_2,l_3$ divide the plane in two finite domains and one infinite. Let us denote the finite domains by   $\sigma_1, \sigma_2$.

Both vertices $A$ and $B$  have exactly one additional  edge each. 
Either both these edges go inside $\sigma_i$ ($i=1,2$), or none of them goes inside $\sigma_i$. 
(Otherwise in the  graph induced by all vertices inside $\sigma_i$ one vertex will have valency $3$ and 
the remaining will have valency $4$, but the sum of all valencies must be even!). 
Thus edges can not go into $\sigma_1$ and  $\sigma_2$ simultaneously. 
 Without loss of generality assume that these edges do not go into $\sigma_1$. But then 
either the doodle is disconnected which is impossible, or 
 $\sigma_1$  is a domain with empty interior which is impossible by b).

\end{proof}
\end{lemma}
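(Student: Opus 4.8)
The strategy is a standard minimality argument: assuming the doodle is a counterexample with the fewest double points, I will show that each of the three forbidden configurations—a face of length $1$ (a simple loop), a face of length $2$ (a bigon), and an edge of multiplicity $\geq 3$—can be ``reduced'' to a smaller doodle, contradicting minimality. In each case the reduction deletes the offending configuration, applies the inductive hypothesis to the resulting doodle with $n-1$ (or fewer) double points, and then reinserts the configuration, controlling how many new inflections this costs.

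For part a), I would argue as follows. Suppose a face of length $1$ exists; its boundary is a single edge forming a loop at a vertex $v$. Delete that loop. The vertex $v$ now has valency $2$, so it is no longer a genuine double point; smooth it by merging its two remaining edges into one. This yields a doodle $G'$ with $n-1$ double points, which by minimality can be drawn with at most $2(n-1)$ inflections. Now reinsert the small loop at the appropriate point of the smoothed arc: attaching a loop to an arc can be done introducing at most $2$ inflections (one on each side where the loop joins the arc), as illustrated in Figure~\ref{return-pet}. Hence $G$ can be drawn with at most $2(n-1)+2 = 2n$ inflections, contradicting the assumption. I must be careful with the degenerate subcase where $G'$ is empty (the whole doodle was a single loop), but then $n=1$ and one checks directly it is not a counterexample.

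For part b), let the bigon have distinct vertices $A,B$ (distinctness is forced since a double point has valency exactly $4$, and if $A=B$ the two bigon edges would already exhaust its valency, leaving an isolated two-edge doodle with one double point, easily checked not to be a counterexample). Delete the two bigon edges and contract $A$ and $B$ to a single vertex $\widehat{AB}$; since $A$ and $B$ each had exactly one further edge, $\widehat{AB}$ has valency $2$ and is again not a double point, so we get a doodle with $n-1$ double points. By minimality draw it with at most $2(n-1)$ inflections, then ``unglue'' $\widehat{AB}$ and reinsert the bigon; the local model (Figure~\ref{return-kr}) shows this adds exactly two inflections, giving $2n$ total—contradiction. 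Part c) reduces to the previous two: given a triple edge on vertices $A,B$, note $A\neq B$ (else valency $6>4$), the three parallel edges cut the plane into two bounded regions $\sigma_1,\sigma_2$ and the unbounded one; a parity argument on valencies inside $\sigma_i$ (the induced subgraph would have exactly one vertex of odd valency) shows the two extra edges at $A,B$ cannot enter $\sigma_1$ and $\sigma_2$ simultaneously—say neither enters $\sigma_1$—so $\sigma_1$ is either a bigon (excluded by b)) or its closure disconnects the doodle (impossible since a minimal counterexample is connected), a contradiction.

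The main obstacle is not the combinatorics of the graph reductions, which are routine, but verifying the geometric ``reinsertion'' claims: that returning a loop costs at most $2$ inflections and that ungluing a contracted vertex to restore a bigon costs exactly $2$. These rely on explicit local pictures (Figures~\ref{return-pet} and~\ref{return-kr}) and on the fact that a plane diffeomorphism can be used to arrange the drawing of $G'$ so that the reinsertion happens in a small disk where the curve is nearly straight; one should check that no inflections already present in the drawing of $G'$ are created or destroyed outside that disk, and that inside the disk the stated count is correct. Making the ``smoothing'' step precise—so that the new curve is still a legitimate doodle drawing with exactly the claimed inflection set—is where the real care is needed.
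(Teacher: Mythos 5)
Your proposal follows essentially the same route as the paper: the same three reductions (delete the length-one loop and smooth the valency-two vertex, delete the bigon and contract $A,B$, and reduce a triple edge to cases a)--b) via the parity argument on the regions $\sigma_1,\sigma_2$), with the same inflection counts of at most two upon reinsertion, which the paper likewise justifies only by the local pictures in Figures~\ref{return-pet} and~\ref{return-kr}. Your extra attention to the degenerate cases (empty graph after removing the loop, $A=B$ in the bigon case) matches the paper's parenthetical remarks, so no substantive difference remains.
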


%%%%%%%%%%%%%%%%%%%%%%%%%%%%%%%%%%%%%%%%%%%%%%%%%%%%%%%%%%%%%%%%%%%%%%
Notice that in our doodle there still might be double edges or loops with non-empty interior.  
Let us  split each loop into three subedges by adding two fake vertices.
Additionally in each double edges we split one of them  into two subedges by adding one  fake vertex.

Denote by $G'$ the obtained planar graph; 
it does not contain multiple edges or loops. 
By Fary's theorem  it has a drawing $ \zeta $ in which all the edges are straight segments 
and  $ \zeta $ is equivalent to the original drawing.

Denote by $ \zeta '$ the drawing of the graph $G$ obtained by a smoothening of the angles between the edges at each vertex in the drawing $ \zeta $  (see Fig.~\ref{sgl}).

{\begin{figure}[htb!]

\centering
\includegraphics[scale=0.5]{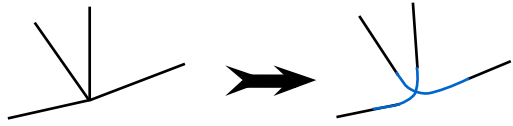}
\caption{Smoothing a vertex of valency $4$.}
\label{sgl}
\end{figure}
}

\begin{lemma}
In the drawing $ \zeta '$ each edge of the graph $ G $ contains at most one inflection.
\begin{proof}

If we do not split an edge, then obviously it has at most one inflection.

If an edge is split into three subedges, then it is a loop. Call it $ ABC $, where $ B, C $ are the fake vertices.
Vertex $A$ is the original and hence its valency is $4$. Thus it has exactly two other edges. 
 Either both other edges go inside the triangle  $\triangle ABC$ or both go outside this triangle.

If they go outside, then either this loop is a face of length $1$ or our doodle is disconnected.
Hence, both edges go inside $\triangle ABC $ (see Fig.~\ref{ABC}~$ I $) and then the loop $ABC$ has no inflections.

{\begin{figure}[htb!]
\centering
\includegraphics[scale=0.4]{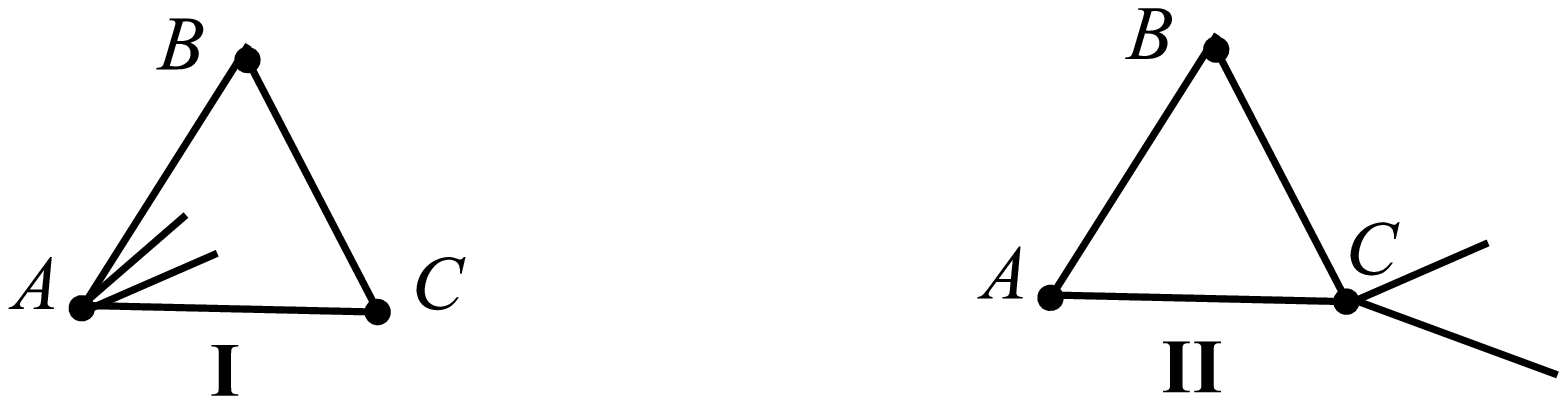}
\caption{}
\label{ABC}
\end{figure}
}

It remains to consider the case when the edge splits into two subedges. Call it $ ABC $ with the fake vertex $ B $. 
Then $AC$ is an edge in the graph $G$.
Consider the triangle $\triangle ABC $. 
If we go along the edge  $ABC$ across $C$ in the doodle we go inside a triangle or along the edge $CA$. 
Then  the part $BC$ of the edge $ABC$ has no inflection. Hence, there is at most one inflection on the edge $ABC$.
In the remaining case we go outside of the triangle; then the fourth edge of $ C $
also goes outside it (see Fig.~\ref{ABC}~$II$). 

Similarly, we need to consider the case when other edges of vertex $A$ go outside  $\triangle ABC$. 
Then these edges do not go inside  $\triangle ABC$, hence,  either there is a face of length 2 
or the doodle is disconnected. Both case are impossible.
Since we covered all possible cases, the lemma is proved.\end{proof} 
\end{lemma}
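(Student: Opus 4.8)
The plan is to split into three cases according to how a given edge $e$ of $G$ shows up in $G'$: (a) $e$ is not split; (b) $e$ is split into three subedges, in which case $e$ is a loop; (c) $e$ is split into two subedges, in which case $e$ is one of the two members of a double edge. In case (a) both endpoints of $e$ are genuine vertices, so in $\zeta$ the edge is a straight segment, and $\zeta'$ modifies it only in small neighbourhoods of the two endpoints, where the corner of the strand carrying $e$ is rounded off. If each such rounding is taken to be a single convex arc, then $e$ in $\zeta'$ is a straight middle piece flanked by two one-signed bends, so its curvature changes sign at most once and $e$ carries at most one inflection. This is the "obvious" case.

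For case (b), write the loop as $ABC$ with $A$ a genuine vertex (valency $4$) and $B,C$ fake; in $\zeta$ the three subedges bound a triangle $\triangle ABC$. The vertex $A$ carries two further edges of $G$, and a parity count — the graph cut out by the vertices lying strictly inside $\triangle ABC$ together with the half-edges entering through $A$ must have an even valency sum — forces these two edges to both enter the interior of $\triangle ABC$ or both leave it. If both leave, then nothing meets that interior (an edge reaching it from outside would cross $ABC$ and create a vertex on it), so the interior is a face of length $1$, contradicting part a) of the previous lemma, unless the doodle is disconnected, which it is not. Hence both further edges enter $\triangle ABC$; then along $ABC$ the rounded corners at $B$ and $C$ are the convex corners of the straight triangle and bend to the same side, the rounding of the strand through $A$ turns to that same side as well, and the curvature never changes sign, so the loop has no inflection at all.

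For case (c), write the subedges as $AB$, $BC$ with $B$ fake and $A,C$ genuine, the other member of the double edge being an edge $CA$ of $G$; in $\zeta$ the segments $AB\cup BC$ together with $CA$ again bound a triangle $\triangle ABC$. Follow the doodle strand carrying the subedge $BC$ across the vertex $C$. If it continues into the interior of $\triangle ABC$, or along the edge $CA$ itself, then the corner rounded at $C$ on the $BC$-side bends to the same side as the convex corner rounded at the fake vertex $B$, so the curvature does not change sign anywhere on $BC$; together with case-(a)-type reasoning on $AB$ this leaves at most one inflection on the whole edge $ABC$. A symmetric argument applies if the strand carrying $AB$ continues into $\triangle ABC$ across $A$. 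The only remaining possibility is that at $C$ both other edges point out of $\triangle ABC$ and at $A$ both other edges point out of $\triangle ABC$ (the "both in or both out" dichotomy coming again from the parity count). But then nothing meets the interior of $\triangle ABC$, so it is a face of length $2$ bounded by the two edges $AB\cup BC$ and $CA$ of $G$, contradicting part b) of the previous lemma, unless the doodle is disconnected, which it is not. Since (a), (b), (c) exhaust all edges of $G$, the lemma follows.

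The delicate step is case (c) — and, to a lesser extent, the end of case (b): one must keep careful track of the side to which each rounded corner bends and verify that "the strand leaves $C$ into the interior of $\triangle ABC$" really forces the bend at $B$ and the bend of the $C$-corner to agree in sign. This is exactly where the convexity of the straight-line triangle supplied by F\'ary's theorem and the transversality of the doodle's double points (which fixes how the four edges at a genuine vertex pair into strands) are used together. One also has to make sure the parity dichotomy "both further edges inside or both outside" is invoked correctly at each of the genuine vertices involved, and that the exclusion of short faces really forces the triangle interiors in question to be genuine faces rather than containing a disconnected part of the doodle.
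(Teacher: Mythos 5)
Your proposal is correct and follows essentially the same route as the paper: the same three-way case split (unsplit edge, loop split into three subedges, double-edge member split into two), the same parity argument forcing the two extra edges at a genuine vertex to enter or leave the triangle together, and the same appeal to the exclusion of faces of length $1$ and $2$ to rule out the bad configurations. You merely spell out the corner-rounding and sign-of-curvature bookkeeping a bit more explicitly than the paper does.
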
 

Lemma~3 implies that the number of inflections does not exceed the number of edges, 
hence it is at most  $2n$. Theorem~1 is proved. \end{proof}

%%%%%%%%%%%%%%%%%%%%%%%%%%%%%%%%%%%%%%%%%%%%%%%%%%%%%%%%%%%%%%%%%%%%%%%%%%%%%%%%%%%%%%%%%%%%%%%%%%%%%%%%%%%%%%%%%%%%%%%%%%%%%%%%%%%%
\bigskip

\begin{proof}[Proof of Theorem~\ref{th:tree}]

We prove that minimal number of inflections is not more  than number of double points plus $1$.
The idea of the proof without plus $1$ can be found in the remark~\ref{m1}.

Now assume the contrary, i.e. that there exists a tree-like curve with $n$ double points 
which can not be drawn with less than $n+2$ inflections. 
Let us consider a counterexample with the minimal number of double points. Obviously, $n>1$.

%Consider our tree-like curve as appropriate tree.
Consider the tree that corresponds to our curve.
We split our curve into $n+1$ closed parts of curve, these parts corresponds to vertices of the tree 
and points of tangency of these parts corresponds to edges of the tree  (see fig.~\ref{tree}, more information about appropriate tree see in~\cite{Sh}).

{\begin{figure}[htb!]
\centering
\includegraphics[scale=0.4]{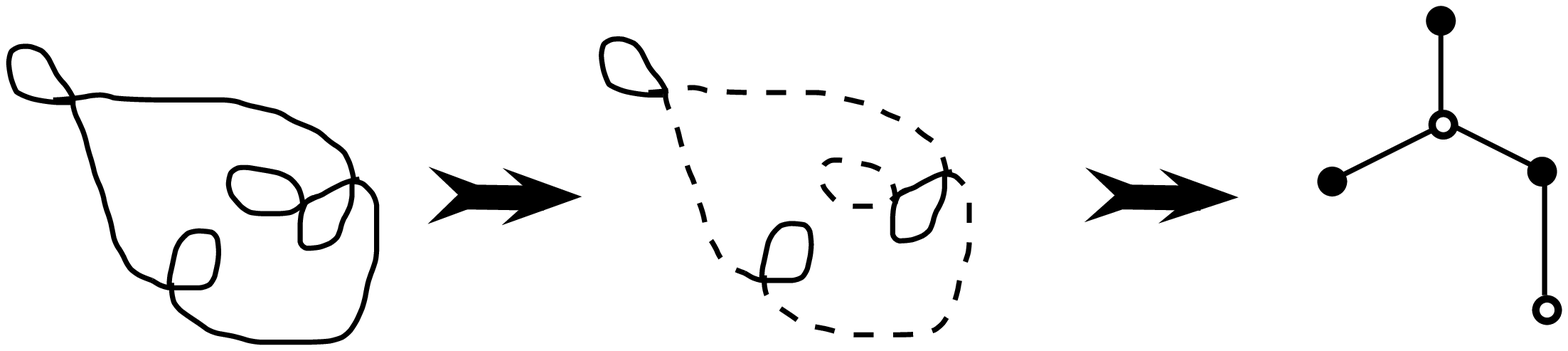}
\caption{}
\label{tree}
\end{figure}
}

%
%Perhaps outside face corresponds to vertex  then we call this vertex bad, resp. other vertices are good.
%Consider the longest path in this tree, denote it by $v_1\dots v_k$. Obviously,  $k>2$, otherwise $n=1$.
If the outer face corresponds to vertex of the tree, then we call this vertex {\it bad}, all other vertices are called {\it good}.
Let $v_1\dots v_k$ be the longest path in the tree.

\begin{lemma} For a minimal counterexample the following conditions are impossible.
\begin{itemize}
	\item $a)$  The vertex $v_2$ ($v_{k-1}$)  has degree $deg(v_2)>2$ and it is adjacent to $deg(v_2) - 1 $ good leaf vertices.
               
  \item $b)$ The vertex $v_2$ ($v_{k-1}$) has degree $deg(v_2)=2$, $v_1,v_2$ are good and $v_1\cup v_2$ is not boundary
 of outer face.
\end{itemize}
\begin{proof}

%{\bf a)} $deg(v_2)$ vertices attached  to the vertex $v_2$, of which $deg(v_2)-1$ are good leaf vertices. Consequently, there are two good leaf vertices which are attached in a sequence. 
%Remove these two leaf vertices, we got a smaller tree-like curve, hence, it is not a counterexample, then we can draw it and after return two leaf vertices with the addition of no more than two inflections (see fig.~\ref{del-2pet}, left 1-3).

{\bf a)} The vertex $v_2$ is adjacent to $deg(v_2)$ vertices and $deg(v_2)-1$ of them are good leaves. Consequently, there are two good leaves which are attached in a sequence. 
Removing these two leaves, we obtain a smaller tree-like curve, hence, it is not a counterexample.
We can draw this curve with number of inflections is less that number of double points plus $1$
 and after that we return two  deleted leaves with addition no more than two inflections (see fig.~\ref{del-2pet}, left 1-3).

{\begin{figure}[htb!]
\centering
\includegraphics[scale=0.2]{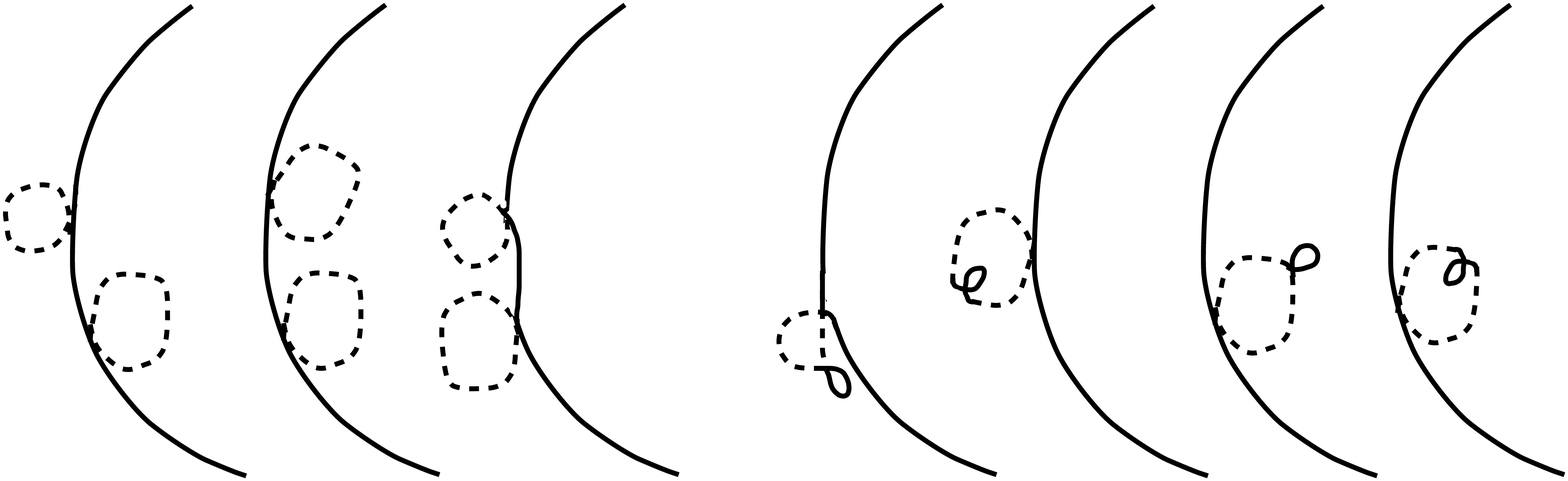}
\caption{}
\label{del-2pet}
\end{figure}
}

%{\bf b)} The vertex $v_2$ adjacent only to  vertices $v_1$ and $v_3$, furthermore $v_3$ is attached to outer side of $v_2$. 
%Remove two vertices $v_1$ and $v_2$, we got a smaller tree-like curve, hence, it is not a counterexample, then we can draw it and after return this two vertices with the addition of no more than two inflections (see fig.~\ref{del-2pet}, 4-7).

{\bf b)} The vertex $v_2$ is adjacent only to vertices $v_1$ and $v_3$, furthermore $v_3$ is attached to outer side of $v_2$. 
Removing vertices $v_1$ and $v_2$, we obtain a smaller tree-like curve, hence, it is not a counterexample.
We can draw this curve with number of inflections is less that number of double points plus $1$
 and after that we return two deleted vertices with addition no more than two inflections (see fig.~\ref{del-2pet}, 4-7).

\end{proof}
\end{lemma}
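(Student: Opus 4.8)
The plan is to treat a) and b) by a single excision--reinsertion scheme, parallel to the one used above for the doodle case but carried out on the tree $T$ rather than on a planar graph. In each situation I isolate, near the end $v_1$ of a longest path $v_1v_2\dots v_k$, a sub-piece of the curve consisting of two loops; I delete it to obtain a tree-like curve with $n-2$ double points, which by minimality of the counterexample is not itself a counterexample and hence admits a drawing with at most $(n-2)+1=n-1$ inflections; and I then reattach the two deleted loops so that at most two new inflections appear. This produces a drawing of the original curve with at most $(n-1)+2=n+1$ inflections, contradicting the assumption that our counterexample needs at least $n+2$.

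First I would extract two consequences of maximality of the path $v_1\dots v_k$. Its endpoint $v_1$ is a leaf of $T$, since a further neighbour of $v_1$ would lengthen the path. In the setting of a), every neighbour of $v_2$ other than $v_3$ is also a leaf, since a non-leaf neighbour $w\neq v_3$ of $v_2$ with further neighbour $w'$ would give the longer path $v_k\dots v_3v_2ww'$. Thus the hypothesis of a) says precisely that all $\deg(v_2)-1\geq 2$ neighbours of $v_2$ apart from $v_3$ are good leaves. Exactly one of the $\deg(v_2)$ edges leaving the loop $v_2$ runs to $v_3$, so among the remaining $\deg(v_2)-1\geq 2$ cyclic positions around that loop there are two adjacent edges both leading to good leaves; these bound the two loops I delete. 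In b), $\deg(v_2)=2$ means $\{v_1,v_2\}$ is a pendant path of $T$ attached to $v_3$; since $v_2$ is good and $v_1\cup v_2$ is not the outer boundary, the edge from $v_2$ to $v_3$ must lie on the outer side of the loop $v_2$ while $v_1$ hangs on its inner side. Here I delete the loops $v_1$ and $v_2$.

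In both cases, deleting the two chosen loops (smoothing the two corresponding double points) keeps the curve connected and tree-like and drops the double-point count by exactly $2$. The reduced curve may be an embedded circle or a figure-eight, but the figure-eight itself satisfies the $+1$ bound (it needs $2=1+1$ inflections), so the reduced curve is not a counterexample and is drawable with at most $n-1$ inflections. For the reinsertion in a): since the two good leaves sat at consecutive positions around the loop $v_2$, after a plane diffeomorphism straightening the drawing near $v_2$ one can reattach them as two small convex loops pointing into the same adjacent face, contributing one inflection where each meets the ambient arc and none internally --- at most two in total, as in Fig.~\ref{del-2pet} (left, 1--3). For b): the hypotheses leave room near $v_3$, on its outer side, to glue the two-loop pendant chain $v_1\cup v_2$ back in with at most two new inflections, as in Fig.~\ref{del-2pet} (4--7).

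The step I expect to be the main obstacle is the reinsertion, namely proving rigorously that a valid drawing can be normalised by a diffeomorphism of the plane near $v_2$ (resp. $v_3$) to a model into which the two loops fit while adding only two inflections. The combinatorial hypotheses are exactly what rules out the bad configurations: in a) they force the two reattached loops to be consecutive around $v_2$, so they do not sandwich an arc that would demand an extra inflection; in b) the conditions that $v_1,v_2$ are good and that $v_1\cup v_2$ is not the outer boundary pin down that the chain is reattached from the outer side of $v_2$, the situation drawn in the figure. Checking that these local surgeries are always available, and that the reduced curve inherits the associated-tree structure used in the induction, is where the real work lies; the global count is then immediate.
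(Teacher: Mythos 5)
Your proposal is correct and follows essentially the same route as the paper: in case a) you delete two good leaves attached in consecutive positions around $v_2$, and in case b) the pendant pair $v_1\cup v_2$, invoke minimality to draw the reduced curve (with $n-2$ double points) using at most $n-1$ inflections, and reattach the deleted pieces at the cost of at most two inflections, exactly the surgery the paper illustrates in Fig.~\ref{del-2pet}. The reinsertion step you flag as the remaining obstacle is treated in the paper only by appeal to that figure, so your write-up is at the same level of rigor, and your extra observations (the figure-eight case of the reduced curve, the maximality of the path forcing the neighbours of $v_2$ to be leaves) are harmless refinements of the same argument.
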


Now return to the proof of our theorem. Consider the next case, let $deg(v_2)>2$.
The vertex $v_2$ is adjacent to at least $deg(v_2)-1$ leaves. Hence (by $a)$), one of these leaves is bad and $k>3$ (otherwise $v_2$ is adjacent to $deg(v_2)$ leaves). 
Then $v_{k-1}$,~$v_{k}$ are good and $v_{k-1}\cup v_{k}$ is not a boundary  of outer face, hence, $deg(v_{k-1})>2$ (otherwise we have a contradiction to $b)$). 
Similarly, $v_ {k-1}$ is adjacent to the bad vertex too. Then the bad vertex has degree at least two, but it is a leaf in this case. Hence, this case is not possible.

Then $v_2$ has degree $2$ and, analogically, the vertex $v_{k-1}$ has degree $2$. Furthermore, $v_1$, $v_2$ or $v_1\cup v_2$ is the boundary of outer face (otherwise we have a contradiction to $b)$).
Similarly, $v_{k-1}$, $v_{k}$ or $v_{k-1}\cup v_{k}$ is the boundary of outer face. Hence, $k=3$ and $v_2$ is a bad vertex. Then all vertices except $v_2$ are attached to the inner side of $v_2$, but this tree-like curve can be drawn without inflections. This is a contradiction. We consider all possible cases, the theorem is proved.

\end{proof}

\begin{remark}
\label{m1}
To prove the bound without plus $1$ we must prove that tree-like curves with $3$ double points are not counterexamples, because our proof is based on the step from $n$ to $n-2$.

%This estimate is tight.
%There are examples with $2k$ double points, which can not be drawn with less than $2k$ double.
%We must take the closed curve with alternating $2k$ loops by turn outward and inward (proof that at least $2k$ inflections see in~\cite{Sh}).
\end{remark}

\bigskip
%%%%%%%%%%%%%%%%%%%%%%%%%%%%%%%%%%%%%%%%%%%%%%%%%%%%%%%%%%%%%%%%%%%%%%%%%%%%%%%%%%%%%%%%%%%%%%%%%%%%%%%%%%%%%%%%%%%%%%%%%%%%%%%%%%%%
\section{\bf On minimal fragments forcing an inflection.}

\begin{definition} A {\it fragment} is the union of a finite number of immersed plane curves without triple intersections
(up to diffeomorphisms).
\end{definition}

Obviously, if a doodle $\ga$ has $k$ disjoint
fragments forcing an inflection (see next definition), then any drawing of $\ga$
contains at least $k$ inflections.

\begin{definition} A fragment is called {\it a minimal fragment forcing an inflection} if the following two conditions are satisfied (see Fig.~\ref{min}):

\begin{itemize}
	\item any drawing of this fragment necessarily contains an inflection point.

	\item removing any double point or any curve or cutting any curve (between two double points) we obtain a fragment which can be drawn without inflection points. 
\end{itemize}

\end{definition}

{\begin{figure}[htb!]
\centering
\includegraphics[scale=0.5]{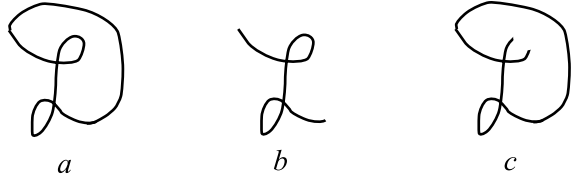}
\caption{Fragments forcing an inflection point. $a$ -- non-minimal, $b,c$ -- minimal.}
\label{min}
\end{figure}
}

\begin{remark}Obviously, any 
minimal fragment forcing an inflection
is
connected.
\end{remark}

In this section we construct an infinite series of minimal fragments forcing an inflection. Additionally,
this construction implies the following result:

\begin{theorem}
\label{exp}
There exists $c>0$ such that the number of fragments 
forcing an inflection with at most $n$ double points is at least $e^{cn}$. 
\end{theorem}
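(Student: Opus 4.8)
The plan is to deduce Theorem~\ref{exp} from an explicit, exponentially large family of fragments forcing an inflection, all of which have only linearly many double points. Concretely, for each integer $k$ I would construct fragments $M_s$ indexed by bit strings $s\in\{0,1\}^k$ such that (i) each $M_s$ forces an inflection, (ii) $M_s$ has at most $Ck$ double points for an absolute constant $C$, and (iii) the $M_s$ are pairwise non-equivalent as fragments (up to plane diffeomorphism and reparametrization), at least after quotienting by the finite symmetry group of the construction. Setting $n=Ck$, the number of fragments forcing an inflection with at most $n$ double points is then at least $2^{k}/\mathrm{poly}(k)$, which exceeds $e^{cn}$ for a suitable $c>0$; this is exactly the assertion of Theorem~\ref{exp}.

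To build the family I would start from a single minimal fragment $M_{\emptyset}$ forcing an inflection — for instance one of those in Fig.~\ref{min} — and fix $k$ arcs $e_1,\dots,e_k$ of $M_{\emptyset}$ sitting in pairwise distinct positions relative to its planar combinatorial type (if $M_{\emptyset}$ is too small or too symmetric to supply $k$ such arcs, first enlarge it by the very same construction). For each bit $s_i$, glue onto $e_i$, inside a tiny disk, one of two local gadgets $G^0,G^1$ — say a small loop pushed to the left, respectively to the right, of $e_i$ — each contributing a bounded number of double points; this produces $M_s$ with $|D(M_s)|=|D(M_{\emptyset})|+O(k)$ double points. The forcing property of $M_s$ is inherited from $M_{\emptyset}$: given any drawing $\diff(M_s)$, smoothing out all gadget double points changes the picture only inside the tiny disks and yields an immersed curve realizing $M_{\emptyset}$, i.e. a drawing of $M_{\emptyset}$, which by hypothesis carries an inflection point $p$; since $p$ lies outside those disks, $p$ is an inflection of $\diff(M_s)$ as well. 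Distinctness follows because a diffeomorphism identifying $M_s$ with $M_{s'}$ must carry the inner copy of $M_{\emptyset}$ to itself and the decorated arcs to one another respecting the planar structure, hence — by the choice of the $e_i$ and the left/right nature of the gadgets — must match $s$ with $s'$ up to the finite ambiguity coming from $\mathrm{Aut}(M_{\emptyset})$ and a possible reflection, which costs only a polynomial factor in the count.

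Two remarks on scope. First, Theorem~\ref{exp} asks only for fragments forcing an inflection, not minimal ones, so the argument above suffices as stated; but with a little more care the gadgets can be chosen one-sided enough that they create no second, independently removable obstruction, so that each $M_s$ is in fact a \emph{minimal} fragment forcing an inflection — this also delivers the promised infinite series of minimal forcing fragments. Second, the genuinely non-formal ingredient, and the step I expect to be the main obstacle, is the geometric core hidden in ``start from $M_{\emptyset}$'': one must actually prove that $M_{\emptyset}$ (and, in the minimal version, each $M_s$) admits no inflection-free drawing while every one of its proper sub-fragments does. This is a rotation-number / convex-position analysis of the same flavour as the arguments behind Theorems~\ref{th:main} and~\ref{th:tree}; once it is in place, the counting that upgrades it to Theorem~\ref{exp} is immediate.
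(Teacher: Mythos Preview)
Your approach differs from the paper's in an interesting way: the paper builds, for each $k$, a $k$-gon of curves whose sides are each equipped with a loop (with two choices of position per side), and proves \emph{directly} via a convexity/rotation argument that every such fragment forces an inflection; the $2^k/k$ count then follows immediately, and the fragments produced are even minimal. You instead try to bootstrap from a single forcing fragment $M_\emptyset$ by hanging binary gadgets on it, treating the gadgets as inert decorations that exist only to distinguish the $M_s$ from one another.

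The gap is precisely in the ``inert decoration'' step. You write: smoothing the gadget double points yields a drawing of $M_\emptyset$, which carries an inflection $p$, and ``since $p$ lies outside those disks'' it is already an inflection of $\diff(M_s)$. But nothing forces $p$ to lie outside the disks. If you smooth a small convex loop at its self-crossing by a short arc, that arc \emph{must} carry an inflection (the two branches meet transversally at $X$, and joining them by a short convex arc would require almost a full turn --- essentially the loop again). So the inflection of your $M_\emptyset$ drawing may live entirely inside a gadget disk and tell you nothing about $\diff(M_s)$. In other words, the forcing property is not inherited for the purely topological reason you give.

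The statement you want is nevertheless true, but it needs a geometric argument, not a topological one: assuming $\diff(M_s)$ has no inflection, each loop gadget is itself strictly convex, and one can then undo the crossing by a $C^2$-small perturbation of one branch near $X$ while keeping the curve strictly convex there; this produces an inflection-free drawing of $M_\emptyset$, contradicting the choice of $M_\emptyset$. Note that this is exactly the mechanism behind the paper's keys and loops (Lemma~5): a loop does not sit passively on an arc, it \emph{forces} the convexity direction of that arc. Once you use this, your scheme can be made to work, though you still have to manage the bookkeeping of enlarging $M_\emptyset$ to supply $k$ distinguishable arcs without letting $\mathrm{Aut}(M_\emptyset)$ grow with $k$. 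The paper sidesteps all of this by making the gadgets an essential part of the forcing mechanism rather than an afterthought.
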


The above theorem is true even for fragments consisting of curves without self-intersections, but for $n$ at least some $N_0$.
That fact in its turn makes it very hard not only to count the minimum number of inflections of a given doodle but also to find a 
criterion when a doodle can be drawn without inflections.
Now let us construct a series of minimal fragments.

\begin{definition}
{\it A key}  $b$ for the curve $z$ is a curve shown in Fig.~\ref{key}.
\end{definition}

{\begin{figure}[htb!]
\centering
\includegraphics[scale=0.5]{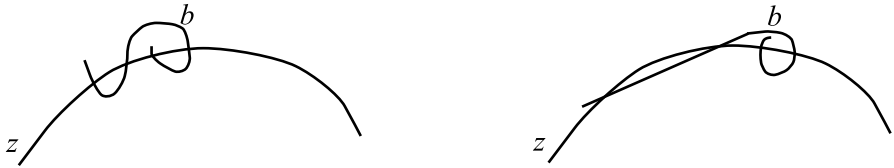}
\caption{Key $b$ for a curve $z$.}
\label{key}
\end{figure}
}

\begin{lemma} 

\begin{enumerate}
	\item If a drawing of a curve $z$  has no inflections, 
	then its key  determines the direction of convexity of the curve $z$.
  \item If a curve $z$ is convex in the right direction, then its key can always be drawn without inflections.
	\item If a part of key is removed, 
	then the remaining parts of the key can always be drawn without inflections.
\end{enumerate}

\begin{proof}
Items $1$ and $3$ are obvious. In the right-hand of Fig.~\ref{key} it is shown how to draw  a key in item $2$.

\end{proof}
\end{lemma}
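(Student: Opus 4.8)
The plan is to handle the three items in turn, the substance being concentrated in item~(2). Throughout I would use the standard dictionary between inflections and convexity: an embedded arc has no inflection if and only if it is weakly convex, i.e. it bounds a locally convex region on one of its two sides, and an inflection is exactly a point where that side changes. Two consequences are used repeatedly: a bounded face of an inflection-free drawing is convex, and conversely a face can be realized convex precisely when the prescribed convexity sides of its boundary arcs are mutually compatible around the cycle.

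For item~(1), I would examine the bounded faces of the drawing of the key glued to $z$. By construction $z$ bounds a distinguished face $F$ of this configuration, while the other arcs of the key meet $z$ transversally at its double points. In an inflection-free drawing the face $F$ is convex, so along $\partial F$ the total turning is consistent; the arcs and vertices of the key contribute a fixed amount of turning, which forces $z$ to be convex toward $F$ rather than away from it — were it convex to the other side, some inflection-free boundary arc of $F$ would have to reverse its convexity, a contradiction. This is precisely the sense in which the key \emph{determines the direction of convexity} of $z$, and I would record it as: in any inflection-free drawing of the key, $z$ is convex toward the face prescribed by the combinatorics of the key.

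Item~(2) is constructive. Given $z$ drawn as a convex arc bulging in the prescribed direction, I would draw the key exactly as in the right-hand picture of Fig.~\ref{key}: each arc of the key as a convex arc, each crossing transversal, with the arcs nested so that every bounded face of the whole picture is convex. One then checks that along each arc of the key the convexity never switches and that no vertex forces an inflection on an adjacent arc. Since the key has only finitely many arcs and vertices this is a finite verification, but it is the one place where the specific shape of the key enters rather than a general-position argument, so I expect it to be the main obstacle.

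For item~(3), the point is that the full key forces an inflection only through a cyclic incompatibility of turning directions around its arcs. Deleting any double point, deleting any curve of the key, or cutting any curve between two consecutive double points destroys every such cycle, so the remaining face structure imposes no conflicting convexity requirement and all arcs can be chosen convex with mutually compatible sides. Concretely I would realize the underlying planar graph with straight edges via Fáry's theorem and smooth the vertices as in the proof of Theorem~\ref{th:main}, which already yields at most one inflection per arc, and then use the slack created by breaking the cycle to straighten each of those arcs as well. This produces an inflection-free drawing of every proper sub-fragment of the key, completing the proof.
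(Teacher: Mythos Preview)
The paper's own proof is a one-liner: items (1) and (3) are declared obvious from inspecting the small explicit picture of the key, and item (2) is proved by exhibiting the inflection-free drawing shown in the right half of Fig.~\ref{key}. Your treatment of item (2) is exactly this, so there is nothing to add there.

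For items (1) and (3), however, your attempt to replace direct inspection by structural arguments has a real gap. In item (3) you write that ``the full key forces an inflection only through a cyclic incompatibility of turning directions around its arcs.'' But the full key does \emph{not} force an inflection: by item (2) it admits an inflection-free drawing whenever $z$ bulges the prescribed way. What the key forces is the \emph{direction of convexity} of $z$; item (3) is the assertion that any proper sub-key can be drawn without inflections \emph{regardless} of which way $z$ bulges, so that the convexity constraint on $z$ disappears. Your F\'ary-plus-smoothing argument, borrowed from the proof of Theorem~\ref{th:main}, yields at most one inflection per arc, not zero, and the clause ``use the slack created by breaking the cycle to straighten each of those arcs as well'' is not an argument --- nothing in the F\'ary drawing guarantees that the convexity sides of adjacent straight edges can be made to agree after one cycle is broken. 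Since the key is a fixed small configuration, the intended proof of (3) is simply to list the finitely many ways of removing a piece and draw each remaining sub-key explicitly with $z$ bulging the ``wrong'' way; that is what the paper means by ``obvious''.

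Your argument for item (1) is closer in spirit to the intended one, but the sentence ``$z$ bounds a distinguished face $F$'' is not accurate as stated ($z$ is an arc, and the face in question is bounded by $z$ together with arcs of the key), and the appeal to ``total turning'' with the key contributing ``a fixed amount'' is too vague to pin down the direction. Again the honest proof is to look at the key: it contains a feature (the closed sub-curve visible in Fig.~\ref{key}) which in any inflection-free drawing must lie on the convex side of $z$, and that is all item (1) says.
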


%%%%%%%%%%%%%%%%%%%%%%%%%%%%%%%%%%%%%%%%%%%%%%%%%%%%%%%%%%%%%%%%%%%%%%%%%%%%%%%%%%%%%%%%%%%%%%%%%%%%%%%%%%%%%%%%%%%%%%%%%%%%%%%%%%%%

Now we present an infinite series of distinct minimal fragments forcing an inflection. %A series  $\Omega$  of fragments 
It consists of fragments  having the following form:
\begin{itemize}
	\item $k\geq 3$ curves bound a domain in which each curve intersects only with its neighbors and goes after crossing inside the domain 
	(see Fig.~\ref{ex}, left).
	\item Each of these curves has either a key of type $II$ or $III$ or a loop close to one of its endpoints (see Fig.~\ref{ex}).
\end{itemize}

{\begin{figure}[htb!]
\centering
\includegraphics[scale=0.4]{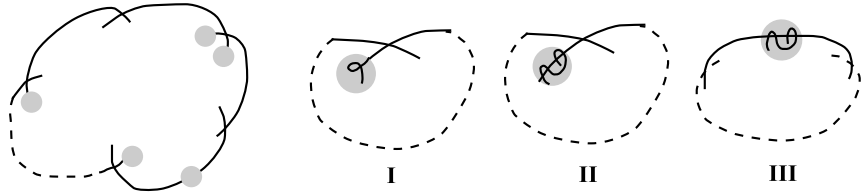}
\caption{Minimal fragments forcing  inflection points.}
\label{ex}
\end{figure}
}

\begin{theorem}
\label{omega}
Fragments in the above series are minimal fragments forcing  inflections.
\begin{proof}

Consider a fragment consisting of $k$ curves (excluding keys).
 This fragment must contain an inflection point, because otherwise  
all curves are convex 
%Merx(
%in the direction of the center of the "`k-gon"
inwards
%Merx)
 (due to the presence of keys or loops)
and the "vertices" of the $k$-gon have the same convexity, but this is impossible.

%%%%%%%%%%%%%%%%%%%%%%%%%%%%%%%%%%%%%%%%%%%%%%%%%%%%%%%%%%%%%%%%%%%%%%%%%%%%%%%%%dalee

It remains to prove that this fragment is minimal.
%{1^\circ} {\it If we remove something from at least one key or a loop} %or a part of curve outside key
%%or part of curve inside the key type~$II$
%then we  can draw one curve convex outward, and the others draw inside ( see Fig.~\ref{ex-del}, left). After that we can draw all loops and keys.
%
%If we remove part of curve inside key type~$III$ 
%then we  can draw the part of curve convex outward, and the other part of this curve  and other curves draw inside and the key ( see Fig.~\ref{ex-del}, middle and right). 
%After that we can draw all loops and keys.
%

{$1^\circ$} {\it If we remove something from at least one key or a loop or cut a loop or a key of type~$II$.}
Then we  can draw one curve convex 
%Merx(
%in the outside direction of the "k-gon", 
outwards
%Merx)
and 
%Merx(
%draw the others in the inside direction 
the others convex inwards %repeat "convex" instead of "draw"
%Merx)
(see Fig.~\ref{ex-del}, left). 
After that we can draw all loops and other keys without inflections.

{$2^\circ$} {\it If we remove a part of a curve inside a key of type~$III$.}
Then we  can draw the part of a curve  convex 
%Merx(
%in outside direction of the "k-gon", and 
outwards,
%Merx)
the other part of this curve  and other curves 
%Merx(
%draw in inside direction, 
convex inwards,
%Merx)
and 
%Merx(
%draw 
%Merx)
this key of type~$III$ 
without inflections (see Fig.~\ref{ex-del}, middle and right). 
After that we can draw all loops and keys without inflections.

{$3^\circ$} {\it If we remove a part of curve inside a key of type~$II$.}
This case can be proved
by combining the ideas of cases $1^\circ$ and $2^\circ$. 
We draw "big" part of the curve convex outwards and other $k-1$ curves inwards (see Fig.~\ref{ex-del}, left)
and later we draw the key of type~$II$ with second part of this curve on the end of "big" part (see Fig.~\ref{ex-del}, right). 
After that we can draw all loops and keys without inflections.

{$4^\circ$} {\it If we cut a curve in the boundary of k-gon outside a key of type~$III$.}
This case
is obvious. We can do all $k$  curves with convexity in the correct direction, because
we should not  build a "$k$-gon".

{\begin{figure}[htb!]
\centering
\includegraphics[scale=0.3]{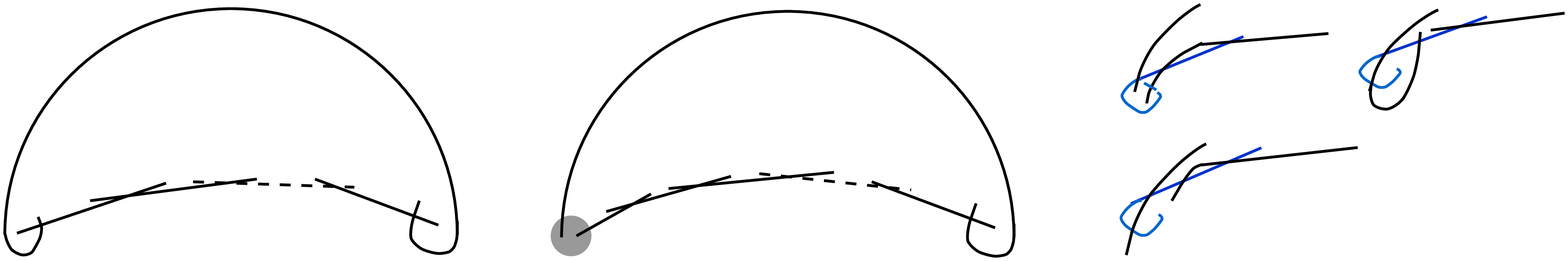}
\caption{}
\label{ex-del}
\end{figure}
}

We 
have
considered all possible cases, 
so
the theorem is proved.
\end{proof}
\end{theorem}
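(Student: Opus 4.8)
The plan is to verify, for every member of the constructed series, the two defining properties of a minimal fragment forcing an inflection: first, that no drawing of the fragment is inflection-free; second, that every admissible mutilation---deleting a double point, deleting one of the constituent curves, or cutting a curve between two of its double points---yields a fragment that \emph{can} be drawn without inflections.

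For the first property I would argue by contradiction. Suppose some drawing of a fragment in the series has no inflection. Then each of its $k\geq 3$ boundary curves is drawn without inflections, hence is convex, bending in a single rotational sense along its entire length. The gadget carried by a boundary curve pins that sense: for a key this is item~(1) of the key lemma, for a loop placed near a crossing point it is immediate, and the gadgets are arranged so that the forced sense always turns the boundary arc of that curve towards the interior of the central $k$-gon. Since, moreover, every boundary curve runs inside the central domain right after it crosses a neighbour, each of the $k$ crossing points is a ``reflex'' corner of that domain. Because distinct boundary arcs meet only at those $k$ crossing points, the boundary of the central domain is a simple closed curve; traversing it once, each of its $k$ smooth arcs and each of its $k$ corners contributes to the total turning with the same sign, so this turning cannot equal the value $2\pi$ forced for a simple closed curve---a contradiction. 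This proves the first property. I expect the step needing the most care to be exactly the two sign assertions---that the gadgets really orient every boundary arc ``inwards'', and that ``runs inside after crossing'' really produces reflex corners; once these are pinned down, the rest of the argument is just a list of explicit inflection-free drawings.

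For the second property I would go through the mutilations case by case, each time producing the required inflection-free drawing; in every case the point is that the obstruction above collapses. If the damage lies inside a gadget---removing part of a key or of a loop, cutting a loop, or cutting a key of type~$II$---then one boundary curve is freed from the constraint, so I would draw that curve convex outwards and the other $k-1$ curves convex inwards; the damaged gadget is then completed without an inflection by item~(3) of the key lemma, and the intact gadgets by item~(2). If instead a sub-arc of a boundary curve lying inside a key of type~$III$ is deleted, I would split that curve at the missing piece, draw the released portion outwards and everything else (the rest of that curve and all other boundary curves) inwards, and again invoke the key lemma; deleting a sub-arc inside a key of type~$II$ combines these two moves. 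Finally, cutting a boundary curve somewhere away from a type~$III$ key breaks the closed $k$-gon outright, so nothing obstructs drawing every curve with its correct convexity; the only delicate point here is that a type~$III$ key may, for the purpose of a cut, be treated like an ordinary stretch of the boundary curve, whereas a cut that meets a type~$II$ key must be reduced to the previous case. The remaining instances of ``delete a double point'' and ``delete a whole curve'' are subsumed by the same drawings. Assembling all the cases gives the second property, and the theorem follows.

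Beyond Theorem~\ref{omega} itself, this construction also settles the exponential lower bound: up to plane diffeomorphism a fragment in the series is determined by the cyclic word of length $k$ recording which of the three gadgets (key~$II$, key~$III$, loop) each boundary curve carries, so there are of order $3^{k}/k$ of them, and since each has $O(k)$ double points this yields at least $e^{cn}$ such fragments with at most $n$ double points, which is Theorem~\ref{exp}.
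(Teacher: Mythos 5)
Your proposal is correct and follows essentially the same route as the paper: the impossibility of an inflection-free drawing comes from all $k$ boundary curves being forced convex inwards by their gadgets so that the corners of the central $k$-gon have incompatible convexity (you merely make this precise via the total turning of the boundary of the central domain), and minimality is verified by the same case analysis, drawing one freed curve (or freed sub-arc) convex outwards and the rest inwards, with the key lemma supplying the inflection-free completion of the gadgets. Your closing remark on the $3^k/k$ count is a harmless strengthening of the paper's bound for Theorem~\ref{exp}, which uses loops only and $2^k/k$.
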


\begin{proof}[Proof of Theorem~\ref{exp}]

We will use only loops (similarly, we could use only keys of type $II$ and $III$). Fixing $k>0$, we have 2 possibilities  for each loop.
Hence, we have at least $2^k/k$ minimal fragments, because each fragment is considered at most $k$ times. 
They have exactly $2k$ double points.
Now it is obvious that there exists desired $c>0$. 
\end{proof}

\end{document}